\newtheorem{theorem}{Theorem}[section]
\newtheorem{conjecture}[theorem]{Conjecture}
\newtheorem{definition}[theorem]{Definition}
\newtheorem{lemma} [theorem]{Lemma}
\title{This is the title}
\begin{document}
	\hrule\hrule\hrule\hrule\hrule
	\vspace{0.3cm}	
	\begin{center}
		{\bf{NONCOMMUTATIVE DONOHO-STARK-ELAD-BRUCKSTEIN-RICAUD-TORR\'{E}SANI  UNCERTAINTY PRINCIPLE}}\\
		\vspace{0.3cm}
		\hrule\hrule\hrule\hrule\hrule
		\vspace{0.3cm}
		\textbf{K. MAHESH KRISHNA}\\
	School of Mathematics and Natural Sciences\\
	Chanakya University Global Campus \\
	Haraluru Village, Near Kempe Gowda International Airport (BIAL)\\
	Devanahalli Taluk, 	Bengaluru  Rural District\\
	Karnataka  562 110 India\\
	Email: kmaheshak@gmail.com\\
		
		Date: \today
	\end{center}

\hrule
\vspace{0.5cm}
\textbf{Abstract}: Let $\{\tau_n\}_{n=1}^\infty$ and $\{\omega_m\}_{m=1}^\infty$ be  two modular Parseval  frames  for a Hilbert C*-module  $\mathcal{E}$. Then for every $x \in \mathcal{E}\setminus\{0\}$,  we show that 
\begin{align}\label{UE}
	\|\theta_\tau x \|_0 	\|\theta_\omega x \|_0	\geq \frac{1}{\sup_{n, m \in \mathbb{N}} \|\langle \tau_n, \omega_m\rangle \|^2}.
\end{align}
 We call Inequality (\ref{UE}) as \textbf{Noncommutative Donoho-Stark-Elad-Bruckstein-Ricaud-Torr\'{e}sani Uncertainty Principle}. Inequality (\ref{UE}) is the noncommutative analogue of breakthrough   Ricaud-Torr\'{e}sani uncertainty principle \textit{[IEEE Trans. Inform. Theory, 2013]}. In particular, Inequality (\ref{UE}) extends   Elad-Bruckstein uncertainty principle \textit{[IEEE Trans. Inform. Theory, 2002]} and Donoho-Stark uncertainty principle \textit{[SIAM J. Appl. Math., 1989]}.

\textbf{Keywords}:  Uncertainty principle, Parseval frame, Hilbert C*-module.

\textbf{Mathematics Subject Classification (2020)}: 42C15, 46L08.
\vspace{0.5cm}
\hrule 

\section{Introduction}
In 1989,  Donoho and Stark derived following uncertainty principle which is one of the greatest inequality of all time in both pure and applied Mathematics  \cite{DONOHOSTARK}. For $h \in \mathbb{C}^d$, let $\|h\|_0$ be the number of nonzero entries in $h$. Let $\hat{}: \mathbb{C}^d \to  \mathbb{C}^d$ be the Fourier transform defined by 
\begin{align*}
	\widehat{(a_j)_{j=0}^{d-1}}\coloneqq \frac{1}{\sqrt{d}}\left(\sum_{j=0}^{d-1}a_je^{\frac{-2 \pi i j k}{d}}\right)_{k=0}^{d-1}, \quad \forall (a_j)_{j=0}^{d-1} \in \mathbb{C}^d.
\end{align*}
\begin{theorem}  (\textbf{Donoho-Stark Uncertainty Principle}) \cite{TERRAS, DONOHOSTARK} \label{DS}
	For every $d\in \mathbb{N}$, 
	\begin{align}\label{DSE}
		\left(\frac{\|h\|_0+\|\widehat{h}\|_0}{2}\right)^2	\geq \|h\|_0\|\widehat{h}\|_0	\geq d, \quad \forall h \in \mathbb{C}^d\setminus \{0\}.
	\end{align}
\end{theorem}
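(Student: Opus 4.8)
The plan is to dispatch the two inequalities separately. The left-hand inequality $\left(\frac{\|h\|_0+\|\widehat h\|_0}{2}\right)^2\ge \|h\|_0\,\|\widehat h\|_0$ is nothing but the AM--GM inequality applied to the two nonnegative reals $\|h\|_0$ and $\|\widehat h\|_0$ (equivalently $(\|h\|_0-\|\widehat h\|_0)^2\ge 0$), so it requires no structure of the Fourier transform at all. All the content is in the right-hand inequality $\|h\|_0\,\|\widehat h\|_0\ge d$, and this is what I would focus on.

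For that bound I would exploit the single structural fact that every entry of the matrix of the normalized Fourier transform has modulus exactly $1/\sqrt d$. Fix $h\in\mathbb{C}^d\setminus\{0\}$. Reading off one coordinate of $\widehat h$ and applying the triangle inequality gives $\|\widehat h\|_\infty\le \frac{1}{\sqrt d}\,\|h\|_1$, and Cauchy--Schwarz restricted to the support of $h$ gives $\|h\|_1\le \sqrt{\|h\|_0}\,\|h\|_2$, hence $\|\widehat h\|_\infty^2\le \frac{1}{d}\,\|h\|_0\,\|h\|_2^2$. Separately, estimating the $\ell^2$-norm of $\widehat h$ on its own support by the support size times the sup-norm gives $\|\widehat h\|_2^2\le \|\widehat h\|_0\,\|\widehat h\|_\infty^2$.

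Now I would invoke Parseval's identity $\|\widehat h\|_2=\|h\|_2$ (the normalized Fourier matrix is unitary) to chain the two estimates: $\|h\|_2^2=\|\widehat h\|_2^2\le \|\widehat h\|_0\,\|\widehat h\|_\infty^2\le \frac{1}{d}\,\|h\|_0\,\|\widehat h\|_0\,\|h\|_2^2$. Since $h\ne 0$ we may divide by $\|h\|_2^2>0$, obtaining $1\le \frac{1}{d}\,\|h\|_0\,\|\widehat h\|_0$, which is precisely $\|h\|_0\,\|\widehat h\|_0\ge d$.

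The argument is short, so there is no real ``hard part'': the only thing to get right is the choice of the four elementary ingredients and the order in which they combine — the $\ell^\infty$--$\ell^1$ bound coming from the flatness of the Fourier kernel, the $\ell^1$--$\ell^2$ Cauchy--Schwarz bound on $\operatorname{supp}(h)$, the $\ell^2$--$\ell^\infty$ bound on $\operatorname{supp}(\widehat h)$, and unitarity. I would remark that the proof is symmetric in $h$ and $\widehat h$, and, more importantly, that the only place the specific value $1/\sqrt d$ enters is as an upper bound on the ``coherence'' between the standard basis and the Fourier basis; replacing $1/\sqrt d$ by $\sup_{n,m}\|\langle\tau_n,\omega_m\rangle\|$ is exactly what yields the Ricaud--Torr\'esani form and, in the Hilbert C*-module setting, Inequality~(\ref{UE}).
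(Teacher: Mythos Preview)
Your proof is correct. Note, however, that the paper does not give its own proof of Theorem~\ref{DS}; the result is simply quoted from \cite{TERRAS, DONOHOSTARK} as background. The only argument the paper actually supplies is for the noncommutative generalization, and when that proof is specialized to $\mathcal{A}=\mathbb{C}$ with the standard and Fourier orthonormal bases it follows essentially your route: expand $\|x\|^2$ over one basis, substitute the expansion of $x$ over the other basis, apply Cauchy--Schwarz on the support to separate the coherence factor from the signal, bound each coherence term by the supremum $1/\sqrt{d}$, count the two support sizes, and cancel $\|x\|^2$. Your packaging via the chain $\|\widehat h\|_\infty\le d^{-1/2}\|h\|_1\le d^{-1/2}\sqrt{\|h\|_0}\,\|h\|_2$ together with $\|\widehat h\|_2^2\le\|\widehat h\|_0\,\|\widehat h\|_\infty^2$ and Parseval is the classical way of organizing exactly the same four ingredients; the paper just runs them as a single string of C*-algebra inequalities (via Lemma~\ref{CAUCHYSCHWARZ}) without naming the intermediate $\ell^p$ norms. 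Your closing remark that replacing $1/\sqrt{d}$ by $\sup_{n,m}\|\langle\tau_n,\omega_m\rangle\|$ yields Inequality~(\ref{UE}) is precisely the observation the paper exploits.
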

By noting that Fourier transform is  unitary and unitary operators are in one to one correspondence with orthonormal bases, in 2002, Elad and Bruckstein generalized Inequality (\ref{DSE}) to arbitrary orthonormal bases \cite{ELADBRUCKSTEIN}. To state the result we need some notations. Given a collection $\{\tau_j\}_{j=1}^n$ in a finite dimensional Hilbert space $\mathcal{H}$ over $\mathbb{K}$ ($\mathbb{R}$ or $\mathbb{C}$), we define 
\begin{align*}
	\theta_\tau: \mathcal{H} \ni h \mapsto \theta_\tau h \coloneqq (\langle h, \tau_j\rangle)_{j=1}^n \in \mathbb{K} ^n.
\end{align*}
\begin{theorem} (\textbf{Elad-Bruckstein Uncertainty Principle}) \cite{ELAD, ELADBRUCKSTEIN} \label{EB}
	Let $\{\tau_j\}_{j=1}^n$,  $\{\omega_j\}_{j=1}^n$ be two orthonormal bases for a  finite dimensional Hilbert space $\mathcal{H}$. Then 
	\begin{align*}
		\left(\frac{\|\theta_\tau h\|_0+\|\theta_\omega h\|_0}{2}\right)^2	\geq \|\theta_\tau h\|_0\|\theta_\omega h\|_0\geq \frac{1}{\max_{1\leq j, k \leq n}|\langle\tau_j, \omega_k \rangle|^2}, \quad \forall h \in \mathcal{H}\setminus \{0\}.
	\end{align*}
\end{theorem}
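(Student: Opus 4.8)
\emph{Sketch of proof.} The first inequality is simply the AM--GM inequality $\big(\frac{a+b}{2}\big)^2\geq ab$ applied to the nonnegative integers $a=\|\theta_\tau h\|_0$ and $b=\|\theta_\omega h\|_0$, so all the content lies in the second inequality. The plan is to run a short ``restrict to the support, then Cauchy--Schwarz'' argument: expand one orthonormal basis in terms of the other so that the Gram entries $\langle\tau_j,\omega_k\rangle$ appear, and close the loop with the Parseval identity.

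Fix $h\in\mathcal{H}\setminus\{0\}$, put $M\coloneqq\max_{1\leq j,k\leq n}|\langle\tau_j,\omega_k\rangle|$, and let $S_\tau\coloneqq\{\,j:\langle h,\tau_j\rangle\neq 0\,\}$, $S_\omega\coloneqq\{\,k:\langle h,\omega_k\rangle\neq 0\,\}$, so that $\|\theta_\tau h\|_0=|S_\tau|$ and $\|\theta_\omega h\|_0=|S_\omega|$. Because $\{\tau_j\}_{j=1}^n$ is an orthonormal basis we have the reconstruction formula $h=\sum_{j\in S_\tau}\langle h,\tau_j\rangle\tau_j$ and the Parseval identity $\|h\|^2=\sum_{j\in S_\tau}|\langle h,\tau_j\rangle|^2$; similarly $\|h\|^2=\sum_{k\in S_\omega}|\langle h,\omega_k\rangle|^2$.

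For each $k$ I would expand $\langle h,\omega_k\rangle=\sum_{j\in S_\tau}\langle h,\tau_j\rangle\langle\tau_j,\omega_k\rangle$, pass to absolute values, factor out $M$, and apply Cauchy--Schwarz over $S_\tau$:
\[
|\langle h,\omega_k\rangle|\;\leq\;M\sum_{j\in S_\tau}|\langle h,\tau_j\rangle|\;\leq\;M\sqrt{|S_\tau|}\Big(\sum_{j\in S_\tau}|\langle h,\tau_j\rangle|^2\Big)^{1/2}\;=\;M\sqrt{\|\theta_\tau h\|_0}\,\|h\|.
\]
Squaring and summing over $k\in S_\omega$, then using the second Parseval identity,
\[
\|h\|^2\;=\;\sum_{k\in S_\omega}|\langle h,\omega_k\rangle|^2\;\leq\;|S_\omega|\,M^2\,\|\theta_\tau h\|_0\,\|h\|^2\;=\;M^2\,\|\theta_\tau h\|_0\,\|\theta_\omega h\|_0\,\|h\|^2.
\]
Since $h\neq 0$ we may divide by $\|h\|^2>0$ to get $\|\theta_\tau h\|_0\,\|\theta_\omega h\|_0\geq 1/M^2$, which is the asserted bound (and in particular forces $M>0$).

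I do not expect a real obstacle: the argument is elementary and finite-dimensional. The one point to be careful about is restricting every sum to the supports $S_\tau,S_\omega$ before invoking Cauchy--Schwarz, since this is exactly what produces the sparsity counts $\|\theta_\tau h\|_0,\|\theta_\omega h\|_0$ rather than the ambient dimension $n$. I also note that the only structural facts used are the reconstruction formula and the Parseval identity for an orthonormal basis; replacing the bases by modular Parseval frames, the scalar modulus by the C*-norm, and $\max$ by $\sup$, while checking that these two identities survive in a Hilbert C*-module, is precisely the path to the noncommutative refinement \eqref{UE}.
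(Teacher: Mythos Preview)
Your argument is correct. The paper does not give a separate proof of Theorem~\ref{EB}; it is quoted from the literature and then subsumed by the noncommutative Theorem~2.2. Your ``restrict to supports, expand one basis in the other, Cauchy--Schwarz, Parseval, cancel $\|h\|^2$'' is exactly the scalar specialization of that proof, and your closing paragraph already identifies the dictionary (scalar modulus $\to$ C*-norm, Cauchy--Schwarz $\to$ Lemma~\ref{CAUCHYSCHWARZ}) that the paper uses to lift it.
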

 In 2013, Ricaud and Torr\'{e}sani  showed that orthonormal bases in Theorem \ref{EB} can be improved to Parseval frames \cite{RICAUDTORRESANI}. Recall that a collection $\{\tau_j\}_{j=1}^n$ in a finite dimensional Hilbert space $\mathcal{H}$ is said to be a Parseval frame for $\mathcal{H}$ \cite{BENEDETTPFICKUS} if 
 \begin{align*}
 	\|h\|^2=\sum_{j=1}^n |\langle h, \tau_j\rangle |^2, \quad \forall h \in \mathcal{H}.
 \end{align*}
\begin{theorem} (\textbf{Ricaud-Torr\'{e}sani Uncertainty Principle}) \cite{RICAUDTORRESANI} \label{RT}
	Let $\{\tau_j\}_{j=1}^n$,  $\{\omega_j\}_{j=1}^n$ be two Parseval frames   for a  finite dimensional Hilbert space $\mathcal{H}$. Then 
	\begin{align*}
		\left(\frac{\|\theta_\tau h\|_0+\|\theta_\omega h\|_0}{2}\right)^2	\geq \|\theta_\tau h\|_0\|\theta_\omega h\|_0\geq \frac{1}{\max_{1\leq j, k \leq n}|\langle\tau_j, \omega_k \rangle|^2}, \quad \forall h \in \mathcal{H}\setminus \{0\}.
	\end{align*}	
\end{theorem}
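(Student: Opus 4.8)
The plan is to reduce both inequalities to a single estimate on the matrix that intertwines the two analysis maps. The left-hand inequality is immediate: it is the AM--GM inequality $\big(\tfrac{a+b}{2}\big)^{2}\ge ab$ with $a=\|\theta_\tau h\|_0$ and $b=\|\theta_\omega h\|_0$, so the entire content lies in the bound $\|\theta_\tau h\|_0\,\|\theta_\omega h\|_0\ge \big(\max_{1\le j,k\le n}|\langle\tau_j,\omega_k\rangle|\big)^{-2}$.

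First I would translate the Parseval hypothesis into operator language. Let $\theta_\tau^{*}\colon \mathbb{K}^{n}\to\mathcal{H}$, $(c_j)_{j=1}^{n}\mapsto\sum_{j=1}^{n}c_j\tau_j$, be the synthesis operator, and similarly $\theta_\omega^{*}$. The Parseval condition for $\{\tau_j\}_{j=1}^{n}$ is equivalent both to the reconstruction identity $\theta_\tau^{*}\theta_\tau=I_{\mathcal{H}}$ and to the statement that $\theta_\tau$ is an isometry, $\|\theta_\tau h\|_{2}=\|h\|$ for all $h\in\mathcal{H}$; the same holds for $\{\omega_j\}_{j=1}^{n}$. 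Next I would form the $n\times n$ cross-Gram matrix $M\coloneqq\theta_\omega\theta_\tau^{*}$ over $\mathbb{K}$, whose $(m,j)$ entry is $\langle\tau_j,\omega_m\rangle$, so that its largest entry in modulus is exactly $\mu\coloneqq\max_{1\le j,k\le n}|\langle\tau_j,\omega_k\rangle|$. Combining this with $\theta_\tau^{*}\theta_\tau=I_{\mathcal{H}}$ yields the identity $\theta_\omega h=\theta_\omega\theta_\tau^{*}\theta_\tau h=M(\theta_\tau h)$, which is the bridge between the two expansions.

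Now fix $h\in\mathcal{H}\setminus\{0\}$ and set $x\coloneqq\theta_\tau h$, $y\coloneqq\theta_\omega h=Mx$, with supports $S\coloneqq\operatorname{supp}(x)$ and $T\coloneqq\operatorname{supp}(y)$, so that $|S|=\|\theta_\tau h\|_0$ and $|T|=\|\theta_\omega h\|_0$. For each $m\in T$ I would write $y_m=\sum_{j\in S}M_{m,j}x_j$ and apply Cauchy--Schwarz over the index set $S$: $|y_m|\le\big(\sum_{j\in S}|M_{m,j}|^{2}\big)^{1/2}\|x\|_{2}\le\sqrt{|S|}\,\mu\,\|x\|_{2}$. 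Squaring and summing over the $|T|$ nonzero coordinates of $y$ gives $\|y\|_{2}^{2}\le|S|\,|T|\,\mu^{2}\,\|x\|_{2}^{2}$. Since both frames are Parseval, $\|x\|_{2}=\|h\|=\|y\|_{2}$, and this common value is nonzero because $h\neq0$; dividing by $\|h\|^{2}$ leaves $1\le|S|\,|T|\,\mu^{2}$, which is precisely the asserted lower bound.

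I do not anticipate a serious obstacle, but two points need care. The first is the bookkeeping that the Parseval identity is used in two guises — as the operator identity $\theta_\tau^{*}\theta_\tau=I$ (to produce $M$) and as the isometry $\|\theta_\tau h\|_{2}=\|h\|$ (to close the inequality) — and, crucially, that the Cauchy--Schwarz step must be carried out after restricting the sum for $y_m$ to $j\in S$; performing it over all $n$ indices would replace $\sqrt{|S|}$ by $\sqrt{n}$ and destroy the bound. The second is to notice that the argument never invoked $n=\dim\mathcal{H}$ nor linear independence of the $\tau_j$, only $\theta_\tau^{*}\theta_\tau=I$; this is exactly why Theorem~\ref{RT} subsumes Theorem~\ref{EB}, and it is the feature that will transfer almost verbatim to the Hilbert C*-module setting once $\|\cdot\|$ is read as the C*-norm and $|\langle\cdot,\cdot\rangle|$ as the norm of a C*-algebra-valued inner product.
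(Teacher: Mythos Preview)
Your argument is correct and, once one specializes the paper's proof of the noncommutative theorem to $\mathcal{A}=\mathbb{C}$ (the paper does not give a separate proof of Theorem~\ref{RT}, only cites it), the two are essentially the same: expand one set of frame coefficients via the other frame's reconstruction identity, apply Cauchy--Schwarz over the restricted support, bound each cross term by $\mu$, and cancel $\|h\|^{2}$. The only cosmetic differences are that you package the intertwiner as the cross-Gram matrix $M=\theta_\omega\theta_\tau^{*}$ and estimate the $\omega$-coefficients in terms of the $\tau$-coefficients, whereas the paper substitutes the reconstruction formula inline and has the roles of $\tau$ and $\omega$ interchanged.
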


The main purpose of this paper is to generalize and derive a noncommutative version of Theorem \ref{RT}. For this we want generalization of Hilbert spaces known as Hilbert C*-modules. Hilbert C*-modules are first introduced by Kaplansky \cite{KAPLANSKY} for modules over commutative C*-algebras and later developed for modules over arbitrary C*-algebras by Paschke  \cite{PASCHKE} and Rieffel \cite{RIEFFEL}.
\begin{definition}\cite{KAPLANSKY, PASCHKE, RIEFFEL}
	Let $\mathcal{A}$ be a  unital C*-algebra. A left module 	 $\mathcal{E}$  over $\mathcal{A}$ is said to be a (left) Hilbert C*-module if there exists a  map $ \langle \cdot, \cdot \rangle: \mathcal{E}\times \mathcal{E} \to \mathcal{A}$ such that the following hold. 
	\begin{enumerate}[\upshape(i)]
		\item $\langle x, x \rangle  \geq 0$, $\forall x \in \mathcal{E}$. If $x \in  \mathcal{E}$ satisfies $\langle x, x \rangle  =0 $, then $x=0$.
		\item $\langle x+y, z \rangle  =\langle x, z \rangle+\langle y, z \rangle$, $\forall x,y,z \in \mathcal{E}$.
		\item  $\langle ax, y \rangle  =a\langle x, y \rangle$, $\forall x,y \in \mathcal{E}$. $\forall a \in \mathcal{A}$.
		\item $\langle x, y \rangle=\langle y,x \rangle^*$, $\forall x,y \in \mathcal{E}$.
		\item $\mathcal{E}$ is complete w.r.t. the norm $\|x\|\coloneqq \sqrt{\|\langle x, x \rangle\|}$,  $\forall x \in \mathcal{E}$.
	\end{enumerate}
\end{definition}
We are going to use the following inequality.
\begin{lemma}\cite{PASCHKE}\label{CAUCHYSCHWARZ} (Noncommutative Cauchy-Schwarz inequality) If $\mathcal{E}$ is a Hilbert C*-module  over $\mathcal{A}$, then 
	\begin{align*}
		\langle x, y \rangle \langle y,x \rangle\leq \|\langle y, y \rangle\|\langle x, x \rangle, \quad \forall x,y \in \mathcal{E}.
	\end{align*}
\end{lemma}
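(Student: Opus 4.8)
The plan is to prove the noncommutative Cauchy–Schwarz inequality stated in Lemma \ref{CAUCHYSCHWARZ}, namely that $\langle x, y \rangle \langle y, x \rangle \leq \|\langle y, y \rangle\| \langle x, x \rangle$ for all $x, y$ in a Hilbert C*-module $\mathcal{E}$ over $\mathcal{A}$. The first observation is that the right-hand side involves a scalar (the norm $\|\langle y, y\rangle\|$, a nonnegative real number) multiplying a positive element, so the inequality is between two positive elements of $\mathcal{A}$, and I should exploit positivity of inner products from axiom (i) together with the module and conjugate-symmetry axioms (ii)--(iv).

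First I would handle the trivial case $y = 0$ (equivalently $\langle y, y\rangle = 0$), where both sides vanish, so assume $\langle y, y \rangle \neq 0$. The key idea is to expand $\langle x - ay, x - ay \rangle \geq 0$ for a well-chosen element $a \in \mathcal{A}$, mirroring the classical completing-the-square argument but done C*-algebraically. Using the axioms, I would compute
\begin{align*}
0 \leq \langle x - ay, x - ay \rangle = \langle x, x \rangle - \langle x, y\rangle a^* - a \langle y, x\rangle + a \langle y, y\rangle a^*.
\end{align*}
The natural choice is to take $a$ so that the cross terms combine favourably; setting $a = \langle x, y \rangle / \|\langle y, y\rangle\|$ (a scalar multiple of the inner product element, legitimate since $\|\langle y, y\rangle\|$ is a positive real) I would substitute and use the operator bound $\langle y, y\rangle \leq \|\langle y, y\rangle\| \, 1_{\mathcal{A}}$, which holds because $\langle y,y\rangle$ is positive with norm equal to its spectral radius. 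This bound lets me control the quadratic term $a \langle y, y\rangle a^*$ by $\|\langle y, y\rangle\| \, a a^*$.

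I expect the main obstacle to be the bookkeeping of noncommutativity: unlike the scalar case, $\langle x, y\rangle$ and $\langle y, x \rangle = \langle x, y\rangle^*$ need not commute, and the term $a\langle y, y\rangle a^*$ must be estimated using the C*-order relation $b^* c b \leq \|c\| \, b^* b$ for positive $c$, rather than by naive cancellation. The plan is therefore to carry the expansion so that after applying $\langle y, y \rangle \leq \|\langle y, y\rangle\| 1_{\mathcal{A}}$ the cross terms and the quadratic term collapse, leaving precisely
\begin{align*}
0 \leq \langle x, x\rangle - \frac{\langle x, y\rangle \langle y, x\rangle}{\|\langle y, y\rangle\|},
\end{align*}
which rearranges to the claimed inequality upon multiplying through by the positive scalar $\|\langle y, y\rangle\|$. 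The final step is simply to clear denominators and state the result; the delicate point throughout is ensuring every inequality manipulation respects the partial order on the self-adjoint part of $\mathcal{A}$, in particular that multiplying a positive-element inequality by a positive scalar and by $a, a^*$ on appropriate sides preserves the order.
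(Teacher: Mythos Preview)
Your argument is correct and is the standard proof of the Cauchy--Schwarz inequality in Hilbert C*-modules (the completing-the-square choice $a=\langle x,y\rangle/\|\langle y,y\rangle\|$ together with the operator estimate $a\langle y,y\rangle a^{*}\le \|\langle y,y\rangle\|\,aa^{*}$). Note, however, that the paper does not supply its own proof of this lemma at all: it is stated with a citation to Paschke and used as a black box, so there is no in-paper argument to compare against; your proof simply fills in the omitted (classical) justification.
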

Given   a unital   C*-algebra  $\mathcal{A}$, define 
\begin{align*}
	\ell^2(\mathbb{N}, \mathcal{A})\coloneqq \left\{ \{a_n\}_{n=1}^\infty: a_n \in \mathcal{A}, \forall n \in \mathbb{N}, \sum_{n=1}^{\infty}a_na_n^* \text{ converges in } \mathcal{A}\right\}.
\end{align*} 
 Modular $\mathcal{A}$-inner product on $\ell^2(\mathbb{N}, \mathcal{A})$ is defined as 
\begin{align*}
	\langle \{a_n\}_{n=1}^\infty, \{b_n\}_{n=1}^\infty\rangle \coloneqq \sum_{n=1}^{\infty}a_nb_n^*,\quad \forall  \{a_n\}_{n=1}^\infty, \{b_n\}_{n=1}^\infty \in 	\ell^2(\mathbb{N}, \mathcal{A}).
\end{align*}
Hence the norm on $	\ell^2(\mathbb{N}, \mathcal{A})$ becomes 
\begin{align*}
	\| \{a_n\}_{n=1}^\infty\|\coloneqq \left\|\sum_{n=1}^{\infty}a_na_n^*\right\|^\frac{1}{2}, \quad \forall  \{a_n\}_{n=1}^\infty \in 	\ell^2(\mathbb{N}, \mathcal{A}).
\end{align*}

\section{Noncommutative Donoho-Stark-Elad-Bruckstein-Ricaud-Torr\'{e}sani Uncertainty Principle}
	We start by recalling  the definition of Parseval frames for Hilbert C*-modules by Frank and Larson  \cite{FRANKLARSON}.
	\begin{definition}\cite{FRANKLARSON}
	Let 	 $\mathcal{E}$ be a Hilbert C*-module over a unital C*-algebra $\mathcal{A}$. A collection $\{\tau_n\}_{n=1}^\infty$ in  $\mathcal{E}$ is said to be a  \textbf{modular Parseval frame} for  $\mathcal{E}$ if 
	\begin{align*}
		 \langle x , x \rangle =\sum_{n=1}^\infty \langle x, \tau_n \rangle \langle  \tau_n, x \rangle,   \quad \forall x \in \mathcal{E}.
	\end{align*}
\end{definition}
As shown in \cite{FRANKLARSON}  a modular Parseval frame $\{\tau_n\}_{n=1}^\infty$ for   $\mathcal{E}$ gives an adjointable isometry 
	\begin{align*}
	\theta_\tau : \mathcal{E} \ni x \mapsto \theta_\tau x \coloneqq   \{\langle x, \tau_n\rangle \}_{n=1}^\infty\in 	\ell^2(\mathbb{N}, \mathcal{A})
\end{align*}
  with adjoint 
\begin{align*}
	\theta_\tau^*: 	\ell^2(\mathbb{N}, \mathcal{A}) \ni \{a_n\}_{n=1}^\infty \mapsto \theta_\tau^*\{a_n\}_{n=1}^\infty\coloneqq \sum_{n=1}^{\infty}a_n \tau_n \in \mathcal{E}.
\end{align*}
We this preliminaries we can derive noncommutative analogue of Theorem \ref{RT}. In the following theorem, given a  subset $\Lambda \subseteq \mathbb{N}$, we set the notation 
\begin{align*}
	o(\Lambda)\coloneqq \text{Number of elements  in }  \Lambda.
\end{align*}
\begin{theorem} (\textbf{Noncommutative Donoho-Stark-Elad-Bruckstein-Ricaud-Torr\'{e}sani Uncertainty Principle})
	For any two   modular Parseval frames  $\{\tau_n\}_{n=1}^\infty$ and $\{\omega_m\}_{m=1}^\infty$ for a Hilbert C*-module $\mathcal{E}$,  we have 
	\begin{align*}
	\left(\frac{\|\theta_\tau x\|_0+\|\theta_\omega x\|_0}{2}\right)^2\geq 	\|\theta_\tau x \|_0 	\|\theta_\omega x \|_0	\geq \frac{1}{\sup_{n, m \in \mathbb{N}} \|\langle \tau_n, \omega_m\rangle \|^2} , \quad \forall x \in  \mathcal{E}, x \neq 0.
	\end{align*}
\end{theorem}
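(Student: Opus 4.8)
The first inequality, $\left(\frac{\|\theta_\tau x\|_0+\|\theta_\omega x\|_0}{2}\right)^2\geq \|\theta_\tau x\|_0\|\theta_\omega x\|_0$, is just the AM-GM inequality applied to the two nonnegative integers $\|\theta_\tau x\|_0$ and $\|\theta_\omega x\|_0$, so the work is entirely in the lower bound $\|\theta_\tau x\|_0\|\theta_\omega x\|_0\geq \frac{1}{\sup_{n,m}\|\langle\tau_n,\omega_m\rangle\|^2}$. The plan is to fix $x\in\mathcal{E}\setminus\{0\}$, let $\Lambda\coloneqq\{n\in\mathbb{N}:\langle x,\tau_n\rangle\neq 0\}$ and $\Delta\coloneqq\{m\in\mathbb{N}:\langle x,\omega_m\rangle\neq 0\}$, so that $\|\theta_\tau x\|_0=o(\Lambda)$ and $\|\theta_\omega x\|_0=o(\Delta)$ (in particular we may assume both are finite, else there is nothing to prove). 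Since $\{\omega_m\}$ is a Parseval frame, $\theta_\omega$ is an isometry with $\theta_\omega^*\theta_\omega=\mathrm{Id}_{\mathcal{E}}$, hence $x=\theta_\omega^*\theta_\omega x=\sum_{m\in\Delta}\langle x,\omega_m\rangle\omega_m$.

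\textbf{Key chain of estimates.} Substituting this reconstruction formula into $\langle x,\tau_n\rangle$ for $n\in\Lambda$ gives
\begin{align*}
	\langle x,\tau_n\rangle=\Big\langle \sum_{m\in\Delta}\langle x,\omega_m\rangle\omega_m,\ \tau_n\Big\rangle=\sum_{m\in\Delta}\langle x,\omega_m\rangle\langle\omega_m,\tau_n\rangle.
\end{align*}
I would then bound $\langle x,\tau_n\rangle\langle\tau_n,x\rangle$ from above. Write $c\coloneqq\sup_{n,m}\|\langle\tau_n,\omega_m\rangle\|$. Using the noncommutative Cauchy-Schwarz inequality (Lemma \ref{CAUCHYSCHWARZ}) on $\ell^2(\mathbb{N},\mathcal{A})$ applied to the finitely supported sequences $\{\langle x,\omega_m\rangle\}_{m\in\Delta}$ and $\{\langle\tau_n,\omega_m\rangle^*\}_{m\in\Delta}=\{\langle\omega_m,\tau_n\rangle^*\}_{m\in\Delta}$ — equivalently to the inner product $\langle x,\tau_n\rangle=\langle \theta_\omega x,\ \{\langle\tau_n,\omega_m\rangle^*\chi_\Delta(m)\}_m\rangle$ — one obtains
\begin{align*}
	\langle x,\tau_n\rangle\langle\tau_n,x\rangle\leq\Big\|\sum_{m\in\Delta}\langle\tau_n,\omega_m\rangle\langle\omega_m,\tau_n\rangle\Big\|\ \langle x,x\rangle\leq c^2\, o(\Delta)\,\langle x,x\rangle,
\end{align*}
where the last step estimates each of the $o(\Delta)$ summands $\langle\tau_n,\omega_m\rangle\langle\omega_m,\tau_n\rangle$ in norm by $c^2$ and uses the triangle inequality. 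Now sum over $n\in\Lambda$ and use that $\{\tau_n\}$ is a Parseval frame: $\langle x,x\rangle=\sum_{n=1}^\infty\langle x,\tau_n\rangle\langle\tau_n,x\rangle=\sum_{n\in\Lambda}\langle x,\tau_n\rangle\langle\tau_n,x\rangle\leq o(\Lambda)\,c^2\,o(\Delta)\,\langle x,x\rangle$. Since $x\neq 0$, $\langle x,x\rangle$ is a nonzero positive element of $\mathcal{A}$, so taking norms and cancelling $\|\langle x,x\rangle\|>0$ yields $1\leq o(\Lambda)\,o(\Delta)\,c^2$, which is exactly the claimed bound.

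\textbf{Main obstacle.} The one genuinely delicate point is the middle estimate: I must take the operator-order inequality $\langle x,\tau_n\rangle\langle\tau_n,x\rangle\leq c^2 o(\Delta)\langle x,x\rangle$ and convert it, after summing over $n\in\Lambda$, into a scalar inequality. Summing preserves the order relation in $\mathcal{A}$, giving $\langle x,x\rangle\leq c^2\,o(\Lambda)\,o(\Delta)\,\langle x,x\rangle$ as elements of $\mathcal{A}$; then applying the C*-norm (which is monotone on positive elements) and dividing by $\|\langle x,x\rangle\|\neq 0$ finishes it. One must also be slightly careful that the Cauchy-Schwarz step is applied in the right Hilbert module, namely $\ell^2(\mathbb{N},\mathcal{A})$, with $\{\langle\tau_n,\omega_m\rangle^*\}_{m\in\Delta}$ viewed as an element there — its norm squared is $\big\|\sum_{m\in\Delta}\langle\tau_n,\omega_m\rangle\langle\omega_m,\tau_n\rangle\big\|$, which is what feeds the bound. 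Everything else is bookkeeping with the reconstruction formula and finiteness of the supports.
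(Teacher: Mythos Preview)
Your proof is correct and follows essentially the same route as the paper: both substitute the Parseval reconstruction $x=\sum_{m\in\Delta}\langle x,\omega_m\rangle\omega_m$ into $\langle x,\tau_n\rangle$, apply the noncommutative Cauchy--Schwarz inequality in $\ell^2(\mathbb{N},\mathcal{A})$ to bound $\langle x,\tau_n\rangle\langle\tau_n,x\rangle$ by $\bigl\|\sum_{m\in\Delta}\langle\tau_n,\omega_m\rangle\langle\omega_m,\tau_n\rangle\bigr\|\,\langle x,x\rangle\leq c^2\,o(\Delta)\,\langle x,x\rangle$, sum over $n\in\Lambda$, and cancel $\|\langle x,x\rangle\|$. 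The only cosmetic differences are that the paper carries out the whole chain inside a single norm computation rather than first deriving the operator-order inequality $\langle x,x\rangle\leq c^2\,o(\Lambda)\,o(\Delta)\,\langle x,x\rangle$, and there is a small slip in your labeling of the second Cauchy--Schwarz vector (it should be $\{\langle\tau_n,\omega_m\rangle\}_{m\in\Delta}$, without the extra star, so that $\langle\{a_m\},\{b_m\}\rangle=\sum_m a_m b_m^*$ matches $\langle x,\tau_n\rangle$).
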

\begin{proof}
Let $x \in \mathcal{E}$ be nonzero. Using Lemma \ref{CAUCHYSCHWARZ} and the  well-known fact in C*-algebra that `norm respects ordering of positive elements', we get 
\begin{align*}
	\|x\|^2&=\|\langle x, x \rangle \|=	\left\| \sum_{n=1}^{\infty} \langle x, \tau_n \rangle\langle  \tau_n, x   \rangle \right\|=\left\| \sum_{n \in \text{supp}(\theta_\tau x)} \langle x, \tau_n \rangle\langle  \tau_n, x   \rangle \right\|\\
	&=\left\| \sum_{n \in \text{supp}(\theta_\tau x)} \left \langle \sum_{m=1}^{\infty} \langle x, \omega_m \rangle \omega_m, \tau_n \right \rangle \left \langle  \tau_n,   \sum_{k=1}^{\infty} \langle x, \omega_k \rangle \omega_k \right \rangle \right\|\\
	&=\left\| \sum_{n \in \text{supp}(\theta_\tau x)} \left \langle \sum_{m\in \text{supp}(\theta_\omega x)} \langle x, \omega_m \rangle \omega_m, \tau_n \right \rangle \left \langle  \tau_n,   \sum_{k\in \text{supp}(\theta_\omega x)} \langle x, \omega_k \rangle \omega_k \right \rangle \right\|\\
	&=\left\| \sum_{n \in \text{supp}(\theta_\tau x)} \left(\sum_{m\in \text{supp}(\theta_\omega x)}\langle  x, \omega_m\rangle \langle \tau_n, \omega_m   \rangle ^*\right)\left(\sum_{k \in \text{supp}(\theta_\omega x)}\langle x,   \omega_k\rangle \langle  \tau_n, \omega_k  \rangle \right)^* \right\|\\
	&\leq \left \|\sum_{n \in \text{supp}(\theta_\tau x)}  \left\|\sum_{m \in \text{supp}(\theta_\omega x)}\langle \tau_n, \omega_m   \rangle\langle \tau_n, \omega_m   \rangle^*\right\|\left(\sum_{k \in \text{supp}(\theta_\omega x)}\langle x,   \omega_k\rangle \langle  \omega_k, x  \rangle \right)\right\|\\
	&\leq \left \|\sum_{n \in \text{supp}(\theta_\tau x)}  \sum_{m \in \text{supp}(\theta_\omega x)}\|\langle \tau_n, \omega_m   \rangle\langle \tau_n, \omega_m   \rangle^*\|\left(\sum_{k \in \text{supp}(\theta_\omega  x)}\langle x,   \omega_k\rangle \langle  \omega_k, x  \rangle \right)\right\|\\
	&\leq \left(\sup_{n,  m \in \mathbb{N}}\|\langle \tau_n, \omega_m\rangle \|^2\right)\left \|\sum_{n \in \text{supp}(\theta_\tau x)}  \sum_{m \in \text{supp}(\theta_\omega x)}1\cdot \left(\sum_{k \in \text{supp}(\theta_\omega  x)}\langle x,   \omega_k\rangle \langle  \omega_k, x  \rangle \right)\right\|\\
	&\leq \left(\sup_{n,  m \in \mathbb{N}}\|\langle \tau_n, \omega_m\rangle \|^2\right)\left \|o(\text{supp}(\theta_\tau x) o(\text{supp}(\theta_\omega x)\left(\sum_{k \in \text{supp}(\theta_\omega  x)}\langle x,   \omega_k\rangle \langle  \omega_k, x  \rangle \right)\right\|\\
	&=\left(\sup_{n,  m \in \mathbb{N}}\|\langle \tau_n, \omega_m\rangle \|^2\right)\|\theta_\tau x\|_0\|\theta_\omega x\|_0\left\|\sum_{k \in \text{supp}(\theta_\omega x)}\langle x,   \omega_k\rangle \langle  \omega_k, x  \rangle \right\|\\
	&=\left(\sup_{n,  m \in \mathbb{N}}\|\langle \tau_n, \omega_m\rangle \|^2\right)\|\theta_\tau x\|_0\|\theta_\omega x\|_0\|	\langle x, x \rangle \|\\
	&=\left(\sup_{n,  m \in \mathbb{N}}\|\langle \tau_n, \omega_m\rangle \|^2\right)\|\theta_\tau x\|_0\|\theta_\omega x\|_0	\|x\|^2.
\end{align*}
By canceling $\|x\|$ we get the stated inequality.
\end{proof}

Using Chebotarev theorem,  in 2005, Tao \cite{TAO, SILBERSTEIN}  improved Theorem \ref{DS} for prime dimensions $d$. 
\begin{theorem}\label{T}
(\textbf{Tao Uncertainty Principle}) \cite{TAO}
For every prime $p$, 
\begin{align*}
 \|h\|_0+\|\widehat{h}\|_0	\geq p+1, \quad \forall h \in \mathbb{C}^p\setminus \{0\}.
\end{align*}
\end{theorem}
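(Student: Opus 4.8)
The plan is to deduce this from Chebotarev's theorem, which asserts that for a prime $p$ every square submatrix of the discrete Fourier matrix $F=(\omega^{jk})_{0\le j,k\le p-1}$, with $\omega=e^{-2\pi i/p}$, is nonsingular. The overall strategy is an argument by contradiction: I would assume the conclusion fails and then exhibit a nonsingular homogeneous linear system whose only solution is the zero vector, contradicting $h\neq 0$.

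More precisely, I would fix a nonzero $h\in\mathbb{C}^p$ and write $A=\mathrm{supp}(h)$ and $B=\mathrm{supp}(\widehat h)$, so that $a:=\|h\|_0=o(A)$ and $b:=\|\widehat h\|_0=o(B)$. Suppose, toward a contradiction, that $a+b\le p$. For each index $k\notin B$ one has $\widehat h_k=0$, which after clearing the normalizing factor $1/\sqrt p$ reads $\sum_{j\in A}h_j\,\omega^{jk}=0$. This is a homogeneous linear system in the $a$ unknowns $\{h_j:j\in A\}$, with one equation for each of the $p-b$ indices $k\notin B$; its coefficient matrix is precisely the submatrix of $F$ obtained by selecting the columns indexed by $A$ and the rows indexed by the complement of $B$.

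Since $a+b\le p$ gives $p-b\ge a$, I would then select any $a$ of these rows to obtain a square $a\times a$ submatrix $M$ of $F$. By Chebotarev's theorem $M$ is nonsingular, so the corresponding $a$ equations force $h_j=0$ for every $j\in A$; hence $h=0$, contradicting the choice of $h$. Therefore the assumption $a+b\le p$ is impossible, and I conclude $\|h\|_0+\|\widehat h\|_0=a+b\ge p+1$, as desired.

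The only genuine input is Chebotarev's theorem on the nonvanishing of the minors of the Fourier matrix; once that is granted, the remainder is the elementary linear-algebra observation above. Accordingly the main obstacle lies not in the contradiction argument but in the bookkeeping needed to identify the vanishing conditions $\widehat h_k=0$ ($k\notin B$) with an honest square submatrix of $F$, and in recording that primality of $p$ is exactly what Chebotarev requires. No weaker hypothesis on $p$ will do, since for composite $p$ the Fourier matrix possesses vanishing minors and the bound $p+1$ genuinely fails.
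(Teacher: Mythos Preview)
Your argument is correct and is essentially the standard derivation: assume $\|h\|_0+\|\widehat h\|_0\le p$, read the vanishing Fourier coefficients as a homogeneous linear system supported on $A=\operatorname{supp}(h)$, extract a square $a\times a$ minor of the DFT matrix, and invoke Chebotarev's theorem to force $h=0$. The bookkeeping you describe (selecting $a$ rows from the $p-b\ge a$ available ones) is exactly what is needed, and your remark that primality is indispensable is accurate.

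Note, however, that the paper does not supply its own proof of this theorem: it is stated with a citation to Tao's paper \cite{TAO} and used only as motivation for the conjecture that follows. So there is no in-paper argument to compare against. Your proof is in fact the same one Tao gives (and the one alluded to by the paper's phrase ``Using Chebotarev theorem''), so nothing is missing.
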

In view of Theorem \ref{T} we make the following conjecture. 
\begin{conjecture}
Let $p$ be a prime and $\mathcal{A}$ be a unital   C*-algebra  with invariant basis number property.  Let  $\hat{}: \mathcal{A}^p \to  \mathcal{A}^p$ be the noncommutative Fourier transform defined by 
\begin{align*}
	\widehat{(a_j)_{j=0}^{p-1}}\coloneqq \frac{1}{\sqrt{p}}\left(\sum_{j=0}^{p-1}a_je^{\frac{-2 \pi i j k}{p}}\right)_{k=0}^{p-1}, \quad \forall (a_j)_{j=0}^{p-1} \in \mathcal{A}^p.
\end{align*}
Then 
\begin{align*}\
	\|x\|_0+\|\widehat{x}\|_0	\geq p+1, \quad \forall x \in \mathcal{A}^p\setminus \{0\}.
\end{align*}
\end{conjecture}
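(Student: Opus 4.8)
The plan is to mirror the classical proof of Theorem \ref{T}, whose entire content is Chebotarev's theorem on the nonvanishing of the minors of the prime-order Fourier matrix, and to observe that noncommutativity creates no obstruction because every entry of the Fourier matrix is a central scalar multiple of the unit. Write $x=(a_0,\dots,a_{p-1})\in\mathcal{A}^p$, let $\zeta\coloneqq e^{-2\pi i/p}$, set $S\coloneqq\{j:a_j\neq 0\}$ so that $\|x\|_0=o(S)=:a\geq 1$, and let $Z\coloneqq\{k:\widehat{x}_k=0\}$ be the zero set of the transform. Since $\|\widehat{x}\|_0=p-o(Z)$, the desired inequality $\|x\|_0+\|\widehat{x}\|_0\geq p+1$ is equivalent to the single bound $o(Z)\leq a-1$. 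First I would reduce the whole statement to proving this latter bound.

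Next I would argue by contradiction, assuming $o(Z)\geq a$ and fixing a subset of $Z$ of size exactly $a$, which I again denote $Z$. The defining relation $\widehat{x}_k=\frac{1}{\sqrt p}\sum_{j=0}^{p-1}\zeta^{jk}a_j$, together with $a_j=0$ for $j\notin S$ and $\widehat{x}_k=0$ for $k\in Z$, yields the homogeneous $\mathcal{A}$-linear system $\sum_{j\in S}\zeta^{jk}a_j=0$ for all $k\in Z$. In matrix form this reads $F\mathbf{a}=0$, where $F\coloneqq(\zeta^{jk})_{k\in Z,\,j\in S}\in M_a(\mathbb{C})$ is the $a\times a$ submatrix of the Fourier matrix with rows indexed by $Z$ and columns by $S$, and $\mathbf{a}\coloneqq(a_j)_{j\in S}\in\mathcal{A}^a$ is a column vector all of whose entries are nonzero.

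The key step is then to apply Chebotarev's theorem, which for prime $p$ guarantees $\det F\neq 0$, so that $F$ is invertible over $\mathbb{C}$. Here I would invoke the crucial structural observation that the inverse $F^{-1}\in M_a(\mathbb{C})$ again has complex, hence central, entries: embedding $\mathbb{C}\hookrightarrow\mathcal{A}$ via $z\mapsto z\cdot 1_{\mathcal{A}}$, the identity $F^{-1}F=I_a$ persists in $M_a(\mathcal{A})$, and multiplying $F\mathbf{a}=0$ by $F^{-1}$ forces $\mathbf{a}=0$. This contradicts $a_j\neq 0$ for every $j\in S$, establishing $o(Z)\leq a-1$ and hence the theorem.

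I expect the only genuine content to lie in importing Chebotarev's theorem as an external input; there is no real obstacle from the noncommutative setting, precisely because the coefficient matrix $F$ has scalar entries, so invertibility over $\mathbb{C}$ transfers verbatim to injectivity of the induced map on $\mathcal{A}^a$ and no ``noncommutative Chebotarev'' is required. In particular I would remark that the invariant basis number hypothesis on $\mathcal{A}$ is not actually needed for this argument---it serves only to make free-module ranks, and thus the counting of nonzero components, behave well, but the central-scalar structure of $F$ bypasses any appeal to it. The single point demanding care is the bookkeeping $\|\widehat{x}\|_0=p-o(Z)$ and the passage to the size-$a$ subsystem, both of which are routine.
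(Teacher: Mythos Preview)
The paper does not prove this statement at all---it is explicitly labelled a \emph{conjecture} and left open, so there is no argument in the paper to compare your proposal against.

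That said, your proof is correct and in fact resolves the conjecture. The crucial observation, which you identify clearly, is that every entry of the discrete Fourier matrix is a complex scalar multiple of $1_{\mathcal{A}}$ and is therefore central in $\mathcal{A}$. Consequently the homogeneous system $\sum_{j\in S}\zeta^{jk}a_j=0$ ($k\in Z$) is governed by a matrix $F\in M_a(\mathbb{C})$, and Chebotarev's theorem gives $\det F\neq 0$; the inverse $F^{-1}\in M_a(\mathbb{C})$ then acts entrywise on $\mathcal{A}^a$ and forces $\mathbf{a}=0$, exactly as in the commutative case. Your remark that the invariant basis number hypothesis plays no role is also correct: the argument uses only centrality of scalars and nowhere compares ranks of free modules. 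In short, the ``noncommutative'' generalization here is illusory, and the classical proof of Theorem~\ref{T} carries over verbatim.
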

 \section{Acknowledgments}
This research was partially supported by the University of Warsaw Thematic Research Programme ``Quantum Symmetries".

  \bibliographystyle{plain}
 \bibliography{reference.bib}

\end{document}